\documentclass[12pt]{amsart}

\usepackage{amssymb, latexsym, amsmath, extarrows, mathrsfs, amsthm,color, amsrefs} 
\usepackage{graphicx}
\usepackage{courier}
\usepackage{xcolor}
\usepackage{MnSymbol}
\usepackage{enumitem}

\usepackage[margin=1in, centering]{geometry}
\usepackage[bookmarksnumbered, colorlinks, plainpages]{hyperref}
\usepackage{hyperref} 
\hypersetup{
    colorlinks=true,       
    linkcolor=blue,          
    citecolor=magenta,        
    filecolor=magenta,      
    urlcolor=cyan           
}

\usepackage{mathtools}
\mathtoolsset{showonlyrefs,showmanualtags}

\newtheorem{theorem}{Theorem}[section]

\newtheorem{lemma}[theorem]{Lemma}
\newtheorem{proposition}[theorem]{Proposition}
\newtheorem{remark}[theorem]{Remark}

\newtheorem{definition}{Definition}[section]

\newcommand{\e}{\varepsilon}

\newcommand{\sj}{\mathscr{J}}

\newcommand{\Z}{\mathbb Z}
\newcommand{\R}{\mathbb R}

\newcommand{\T}{\mathbb T}
\newcommand{\N}{\mathbb N}

\newcommand{\eqdef}{\overset{\mathrm{def}}{=\joinrel=}}

\definecolor{deepgreen}{cmyk}{1,0,1,0.5}

\def\les{\;\mathrm{\lesssim}\;}

\title[A.c. spectrum of limit-periodic Jacobi matrices]{A note on the absolutely continuous spectrum of limit-periodic Jacobi matrices}

\author[M.\ Solis]{Moises Gomez Solis}
\address{Department of Mathematics \\ Louisiana State University  \\  Baton Rouge, LA 70803, USA}
\email{mgome29@lsu.edu}

\thanks{M.G. Solis was partially supported by NSF-DMS-2143369.}

\begin{document}

\begin{abstract}
We show that limit-periodic Jacobi matrices in the weak exponential class $EC(\eta)$ have purely absolutely continuous spectral type when $\eta$ is sufficiently large, depending only on the $R$-boundedness constant. The proof first establishes homogeneity of the spectrum from the weak exponential approximation rate. A closure theorem for reflectionless Jacobi matrices then transfers reflectionlessness from the periodic approximants to the limit, and a theorem of Poltoratski and Remling rules out singular spectrum on a homogeneous set. This paper may be viewed as a sequel to our earlier manuscript on limit-periodic Jacobi matrices; see \cite{HaSo}.
\end{abstract}

\date{\today}

\maketitle

\section{Introduction}


We study the spectral type of limit-periodic Jacobi matrices, motivated by their connection to quantum dynamics. Here, by a \textit{Jacobi matrix} we mean a bounded self-adjoint operator $\sj = \sj(\{a_n\},\{b_n\})$, defined by
    \begin{equation}
        (\sj \phi)_n = a_{n-1}\phi_{n-1} + b_n\phi_n + a_{n}\phi_{n+1}, \quad\quad n\in \Z, \quad\quad  \text{ acting on }\phi\in\ell^2(\Z),  
    \end{equation}
where $a \equiv \{a_n\}_{n=-\infty}^\infty, b\equiv \{b_n\}_{n=-\infty}^\infty \in \ell^\infty(\Z,\R)$ and $\inf_n a_n >0$.

In the case of one-dimensional Jacobi matrices, it is well known that periodic Jacobi coefficients give purely absolutely continuous spectrum and strong ballistic transport; this holds even when the Jacobi coefficients are matrices with complex entries; see \cite{DMY}. We say that $\sj$ is $q$-periodic if its coefficients $\{a_n\}_{n\in\Z}$ and $\{b_n\}_{n\in\Z}$ are $q$-periodic; that is,
\begin{equation}
    a_{n+q} = a_n, \qquad b_{n+q}=b_n, \qquad \forall \,\,n\in\Z.
\end{equation}

A natural question, studied in \cite{Fil2017}, is under what conditions
ballistic transport and purely absolutely continuous spectrum extend to
limit-periodic Jacobi matrices. Here we focus on the spectral question.
As in \cite{HaSo, Fil2017}, we impose an \(R\)-boundedness assumption on
\(\sj\): there is an \(R\geq1\) such that, for every \(n\in\Z\), the
Jacobi coefficients satisfy
\begin{equation*}
    \frac{1}{R}\leq a_n \leq R, \qquad \text{ and } \qquad |b_n|\leq R.
\end{equation*}
\begin{definition}
    If a Jacobi matrix satisfies the $R$-boundedness assumption, we write $\sj\in\mathcal J(R)$ to indicate membership in the set of $R$-bounded Jacobi matrices.
\end{definition}
\begin{remark}
    The study of ballistic transport is not exclusive to the almost-periodic setting. In recent years, there has been increasing interest in understanding this type of dynamics. Examples include the periodic case \cite{DYL}, the almost-periodic continuum case \cite{Young}, and the quasi-periodic case (for Schr\"odinger operators) \cite{Ge2000}. For a survey of the questions surrounding ballistic transport and the relevant literature, see \cite{DMY}.
\end{remark}

\begin{definition}
    We say that a Jacobi matrix $\sj$ is \textit{limit-periodic} if there exists a sequence $\{\sj_{q_n}\}_{n=0}^{\infty}$ of $q_n$-periodic matrices such that
    \begin{equation}
        \lim_{n\rightarrow\infty} \|\sj - \sj_{q_n}\|_{op} = 0;
    \end{equation}
    here $\|\cdot\|_{op}$ denotes the operator norm.
\end{definition}
The following definition specifies the weak exponential class used in this paper.

\begin{definition}[Exponential class]
    We say that $\sj\in \text{EC}(\eta)$, for a given $\eta>0$, if there are integers $\{q_n\}_{n=0}^\infty \subseteq \N_{\geq1}$, with $q_n\mid q_{n+1}$ and $q_n\neq q_{n+1}$, such that $\sj_{q_n}$ is $q_n$-periodic and 
    \begin{equation}
        \lim_{n\rightarrow\infty} q_{n+1}^3e^{\eta q_n}\|\sj-\sj_{q_n}\|= 0.
    \end{equation}
\end{definition}

The main result is the following theorem.

\begin{theorem}\label{thm:main}
For every $R\geq1$, there exists $\eta_0=\eta_0(R)>0$ such that if $\sj\in EC(\eta)\cap\mathcal J(R)$ with $\eta>\eta_0$, then $\sj$ has purely absolutely continuous spectral type.
\end{theorem}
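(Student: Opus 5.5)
The plan follows the strategy announced in the abstract and has three steps. First, the spectrum $\Sigma=\sigma(\sj)$ is homogeneous in the sense of Carleson. Second, $\sj$ is reflectionless on $\Sigma$. Third, a theorem of Poltoratski and Remling excludes singular spectrum for reflectionless operators on homogeneous sets. Steps two and three are essentially soft. Step one, homogeneity, is where the quantitative assumption $\eta>\eta_0(R)$ is used, and I expect it to be the main obstacle.

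\textbf{Step 1: homogeneity.} Put $\e_n=\|\sj-\sj_{q_n}\|$ and $\Sigma_n=\sigma(\sj_{q_n})$. Each $\Sigma_n$ is a union of at most $q_n$ bands, and by the hypothesis $\e_n=o(q_{n+1}^{-3}e^{-\eta q_n})$. Standard perturbation theory gives that $\Sigma$ lies in the $\e_n$-neighbourhood of $\Sigma_n$, and conversely. The key periodic input, valid for $\sj_{q}\in\mathcal J(R')$ with $R'$ close to $R$, is a lower bound on band lengths. This comes from the discriminant $\Delta_q$, a polynomial of degree $q$ with leading coefficient $(a_1\cdots a_q)^{-1}$. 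Its derivative on each band is at most $C(R)^q$ in size, so each band has length at least $c\,C(R)^{-q}$. I would take $e^{\eta_0}\gg C(R)$.

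Now fix $x\in\Sigma$ and $0<\delta\le \operatorname{diam}\Sigma$. Choose $n$ so that the bands of $\Sigma_n$ have length $\gg\delta$ compared with the scale set by $\e_n$, namely $\delta\gtrsim C(R)^{-q_n}$. The interval $(x-\delta,x+\delta)$ then meets a band $B$ of $\Sigma_n$ in a set of length $\gtrsim\delta$. What remains is to show that $\Sigma\cap B$ has measure comparable to $|B\cap(x-\delta,x+\delta)|$. Passing from $\Sigma_n$ to $\Sigma_{n+1}$ opens at most $q_{n+1}$ new gaps inside $B$. Each new gap has length $\lesssim \e_n$, since the gaps of $\Sigma_{n+1}$ lie within $2\e_n$-neighbourhoods of the gaps of $\Sigma_n$ in the Hausdorff sense. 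The total measure lost is therefore $\lesssim q_{n+1}\e_n$. Summing over all later scales gives a loss $\sum_{k\ge n} q_{k+1}\e_k$. By the decay of $\e_k$ and the choice $\eta>\eta_0$, this is $\ll C(R)^{-q_n}\lesssim\delta$. For the intermediate range of $\delta$ between consecutive scales, I would use the small length of each individual gap rather than the sum. This is where the extra powers of $q_{n+1}$ are spent.

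\textbf{Step 2: reflectionlessness.} Each periodic $\sj_{q_n}$ is reflectionless on $\Sigma_n$. Reflectionless Jacobi matrices with spectrum in a fixed compact set form a closed set in the strong (pointwise coefficient) topology, and their spectra converge. I would invoke the closure theorem in the form: if $\sj_{q_n}\to\sj$ and $\sj_{q_n}$ is reflectionless on $\Sigma_n$ with $\Sigma_n\to\Sigma$ suitably, then $\sj$ is reflectionless on $\Sigma$. The needed set convergence holds because $\Sigma$ and $\Sigma_n$ are $\e_n$-Hausdorff close. Step 1 supplies the mild regularity, $|\Sigma_n\triangle\Sigma|\to0$, that such closure results typically require. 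Here $|\Sigma_n\setminus\Sigma|\lesssim\sum_{k\ge n}q_{k+1}\e_k\to0$.

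\textbf{Step 3: conclusion.} By the Poltoratski--Remling theorem, a Jacobi matrix that is reflectionless on a homogeneous set $E\supseteq\sigma(\sj)$ has purely absolutely continuous spectrum. Steps 1 and 2 verify its hypotheses with $E=\Sigma$, which completes the proof.
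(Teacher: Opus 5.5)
\textbf{Gap in Step 1.} Your Steps 2 and 3 match the paper. Step 2 does not even need Step 1: the Poltoratski--Remling closure theorem only needs the one-sided condition $|\Sigma\setminus\Sigma_n|\to0$, and $|\Sigma\setminus\Sigma_n|\le 2q_n\varepsilon_n$ follows directly from the Hausdorff bound.

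The gap is in Step 1. The crude band scale $\mu_k=c\,C(R)^{-q_k}$ is too small for the weak class $EC(\eta)$. At index $n$ your scheme needs two things:
\begin{itemize}
\item $\delta\lesssim\mu_n$, so that a band of $\Sigma_n$ covers a fixed fraction of the window;
\item $\delta\gg q_{n+1}\varepsilon_n$, so that the later losses are negligible.
\end{itemize}
The hypothesis only gives $q_{n+1}\varepsilon_n=o(q_{n+1}^{-2}e^{-\eta q_n})$. The admissible $\delta$-ranges for consecutive indices therefore overlap only if $C(R)^{q_{n+1}}\lesssim e^{\eta q_n}$ up to polynomial factors, that is, essentially only if $q_{n+1}/q_n\lesssim\eta/\log C(R)$. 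Nothing in $EC(\eta)$ bounds $q_{n+1}/q_n$, and no choice of $\eta_0$ repairs this. What your scheme actually requires is the Pastur--Tkachenko rate $\varepsilon_n\lesssim e^{-\eta q_{n+1}}$.

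Your proposed remedy for the intermediate range $\mu_{n+1}<\delta\lesssim q_{n+1}\varepsilon_n$ also fails.
\begin{itemize}
\item With only the crude bound on the bands of $\Sigma_{n+1}$, the number of new gaps meeting a window of length $\delta$ is bounded only by $\min(q_{n+1},\delta/\mu_{n+1}+2)$. The resulting loss is then not $\ll\delta$.
\item For $\delta\lesssim\varepsilon_n$, nothing you have proved excludes a band of $\Sigma_{n+1}$ of length about $C(R)^{-q_{n+1}}$ flanked by new gaps of length about $\varepsilon_n$. That configuration kills the density bound at points of that band.
\end{itemize}
As a side remark, a new gap inside a band of $\Sigma_n$ has length at most $2d_H(\Sigma_n,\Sigma_{n+1})$ because every point of it lies in $\Sigma_n$, hence within $d_H$ of $\Sigma_{n+1}$. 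It is not because it lies near a gap of $\Sigma_n$.

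\textbf{Missing ingredient: the folded band scale.} You need a lower bound on the bands of $\Sigma_{n+1}$ at the folded scale inherited from the previous period.
\begin{itemize}
\item Regard $\sj_{q_n}$ as $q_{n+1}$-periodic with $\ell=q_{n+1}/q_n$. Then $\Delta_{q_{n+1}}=2T_\ell(\Delta_{q_n}/2)$, so each $q_n$-band is cut at the quasimomenta $\theta=r/(2\ell)$.
\item Your own derivative bound, applied at period $q_n$, gives the band velocity bound $|\dot\lambda(\theta)|\ge c_Re^{-c_1q_n}|\sin 2\pi\theta|$. This makes every artificial subband at least $c_Re^{-c_1q_n}\ell^{-2}$ long.
\item Minimax on the periodic and antiperiodic Floquet fibers shows that the ordered edge lists move by at most $\gamma_{n+1}=\|\sj_{q_{n+1}}-\sj_{q_n}\|\lesssim e^{-\eta q_n}q_{n+1}^{-3}$.
\item Hence every band of $\Sigma_{n+1}$ has length at least $m_{n+1}\asymp e^{-c_1q_n}(q_n/q_{n+1})^2$.
\end{itemize}
This is where the factor $q_{n+1}^3$ is spent: $\gamma_k/m_k\lesssim e^{-(\eta-c_1)q_{k-1}}/(q_kq_{k-1}^2)$ is summable once $\eta>c_1(2R)$. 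The local loss bound $|I\cap(\Sigma_{k-1}\setminus\Sigma_k)|\le C(|I|/m_k+1)\gamma_k$ then gives $|\Sigma\cap B_\delta(E)|\ge\frac45\delta$ for every $\delta\in(m_{n+1},m_n]$. The scales $m_n$ cover all small $\delta$, with no intermediate range left over. With this lemma in place, the rest of your outline goes through.
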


\subsection{Structure of the paper}

The proof of Theorem \ref{thm:main} is organized as follows. In Section 2 we prove homogeneity of $\Sigma=\sigma(\sj)$ directly from the weak exponential class condition. The key point is to use the folded band scale inherited from the preceding periodic approximant, rather than the much smaller crude band scale associated with the full period. In Section 3 we use stability of the reflectionless property under periodic approximation and then combine reflectionlessness with homogeneity to rule out singular spectrum.

\section{Homogeneity of the spectrum}
\begingroup

Throughout this section, $|\cdot|$ denotes one-dimensional Lebesgue measure. We also use the notation
\[
B_\delta(E)=(E-\delta,E+\delta).
\]

We start by recalling the geometric notion of the Hausdorff metric.

\begin{definition}[Hausdorff metric]
    Let $B$ and $G$ be two nonempty compact subsets of $\R$. We denote their Hausdorff distance by
    \begin{equation}
        d_H(B,G)\eqdef \inf \{\e >0 \colon B\subseteq \mathscr{B}_{\e}(G) \text{ and }G\subseteq\mathscr{B}_{\e}(B)\},
    \end{equation}
    where
    \begin{equation}
        \mathscr{B}_\e(X) \eqdef \bigcup_{x\in X}\{y \in \R \colon |x-y|< \e\}.
    \end{equation}
\end{definition}

\begin{definition}[Homogeneity]\label{def: homogeneity}
    Let $F \Subset \R$ be compact. We say that $F$ is homogeneous if there are constants $\delta_0>0$ and $\tau>0$ such that
    \begin{equation}
        |F\cap B_\delta(\lambda)|\geq \tau\delta
    \end{equation}
    for every $\lambda\in F$ and every $0<\delta\leq\delta_0$.
\end{definition}

We will use two standard facts.

\begin{proposition}\label{prop: prop 2.1 in Filman} \cite[Proposition 2.1]{FilHom}
Let $\{B_n\}_{n=1}^{\infty}$ and $\{G_n\}_{n=1}^\infty$ be sequences of compact subsets of $\R$ that converge in the Hausdorff metric. Write $B = \lim_{n\rightarrow\infty} B_n$ and $G = \lim_{n\rightarrow\infty} G_n$. Then
\begin{equation}
    |B\cap G| \geq \limsup_{n\rightarrow\infty} |B_n\cap G_n|.
\end{equation}
\end{proposition}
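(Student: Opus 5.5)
We argue by approximation: fix a compact $K\subset\R$ and reduce to finite unions of intervals, where the measure of an intersection is continuous along the approximating sequence. Take $B_{n_k}\to B$ and $G_{n_k}\to G$ along a subsequence realizing the $\limsup$, so it suffices to show $|B\cap G|\ge \lim_k |B_{n_k}\cap G_{n_k}|$ after relabelling. Fix $\e>0$ and set $B^\e=\overline{\mathscr B_\e(B)}$ and $G^\e=\overline{\mathscr B_\e(G)}$. Hausdorff convergence gives $B_n\subseteq \mathscr B_\e(B)$ and $G_n\subseteq\mathscr B_\e(G)$ for all large $n$. Hence
\[
|B_n\cap G_n|\le |B^\e\cap G^\e|.
\]
So $\limsup_n |B_n\cap G_n|\le |B^\e\cap G^\e|$ for every $\e>0$.

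Next let $\e\downarrow 0$. Because $B$ and $G$ are compact, $\bigcap_{\e>0} B^\e = B$ and $\bigcap_{\e>0}G^\e=G$. Therefore $\bigcap_{\e>0}(B^\e\cap G^\e)=B\cap G$. The sets $B^\e\cap G^\e$ decrease as $\e$ decreases, and for, say, $\e\le 1$ they all lie in a fixed bounded set, so they have finite measure. Continuity of Lebesgue measure from above then gives
\[
|B^\e\cap G^\e|\to|B\cap G|.
\]
Combining this with the previous bound gives the claim.

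The only step needing care is this last limit. One must check two things: the intersection of the closed $\e$-neighbourhoods of a compact set is the set itself, which holds because it is closed; and the measures are finite, which justifies downward continuity. Both are routine. I therefore expect no real obstacle, and the argument does not need the finite-union-of-intervals reduction mentioned at the start.
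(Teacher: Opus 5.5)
Your argument is correct and complete. Eventually $B_n\cap G_n\subseteq B^\e\cap G^\e$, and then continuity from above of Lebesgue measure applies to the bounded closed sets $B^\e\cap G^\e$, which decrease to $B\cap G$; this uses only the one-sided half of Hausdorff convergence. The opening remarks about subsequences and a finite-union-of-intervals reduction are unnecessary and can be deleted. The paper gives no proof of its own, only citing \cite[Proposition 2.1]{FilHom}, so there is no in-paper argument to compare against, and yours supplies a self-contained proof of that cited fact.
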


\begin{proposition}\label{prop: prop 2.2 in Filman}\cite[Proposition 2.2]{FilHom}
    Let $A$ and $B$ be bounded self-adjoint operators on a Hilbert space $\mathcal{H}$. Then
    \begin{equation}
        d_H(\sigma(A),\sigma(B)) \leq \|A-B\|_{\text{op}}.
    \end{equation}
\end{proposition}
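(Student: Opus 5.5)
The plan is a resolvent perturbation argument: show each spectrum lies in the closed $\|A-B\|_{op}$-neighbourhood of the other, then pass to the infimum in the definition of $d_H$. Set $\e_0=\|A-B\|_{op}$. Since $A$ and $B$ are bounded and self-adjoint, $\sigma(A)$ and $\sigma(B)$ are nonempty compact subsets of $\R$, so $d_H(\sigma(A),\sigma(B))$ is defined. By symmetry in $A$ and $B$, it suffices to prove
\[
\operatorname{dist}(\lambda,\sigma(A))\le \e_0 \qquad\text{for every } \lambda\in\sigma(B).
\]

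\textbf{Step 1: the resolvent bound.} For $\lambda\in\R\setminus\sigma(A)$, the spectral theorem gives
\[
\|(A-\lambda)^{-1}\|_{op}=\frac{1}{\operatorname{dist}(\lambda,\sigma(A))}.
\]
This holds because $(A-\lambda)^{-1}$ is a normal operator whose spectrum is $\{(x-\lambda)^{-1}: x\in\sigma(A)\}$, and the norm of a normal operator equals its spectral radius.

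\textbf{Step 2: a Neumann series.} Suppose, for contradiction, that $\lambda\in\sigma(B)$ and $\operatorname{dist}(\lambda,\sigma(A))>\e_0$. Then $\lambda\notin\sigma(A)$, and we can factor
\[
B-\lambda=(A-\lambda)\bigl(I+(A-\lambda)^{-1}(B-A)\bigr).
\]
By Step 1,
\[
\|(A-\lambda)^{-1}(B-A)\|_{op}\le \frac{\e_0}{\operatorname{dist}(\lambda,\sigma(A))}<1.
\]
The second factor is therefore invertible by its Neumann series. The first factor is invertible because $\lambda\notin\sigma(A)$. Hence $B-\lambda$ is invertible, which contradicts $\lambda\in\sigma(B)$. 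Exchanging the roles of $A$ and $B$ gives the symmetric statement for $\lambda\in\sigma(A)$.

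\textbf{Step 3: passing to the Hausdorff distance.} The definition of $d_H$ uses open neighbourhoods $\mathscr B_\e$ and an infimum, so Step 2 must be converted. For any $\e>\e_0$, the inequality $\operatorname{dist}(\lambda,\sigma(A))\le\e_0<\e$ places every point of $\sigma(B)$ in $\mathscr B_\e(\sigma(A))$. Here one uses that the distance to the compact set $\sigma(A)$ is attained. Symmetrically, $\sigma(A)\subseteq\mathscr B_\e(\sigma(B))$. Thus every $\e>\e_0$ is admissible in the infimum, and so $d_H(\sigma(A),\sigma(B))\le \e_0=\|A-B\|_{op}$.

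None of these steps is a genuine obstacle. The only point that needs care is the exact identity in Step 1, which is where self-adjointness (or normality) is essential. For non-normal operators one only has the inequality $\|(A-\lambda)^{-1}\|_{op}\ge 1/\operatorname{dist}(\lambda,\sigma(A))$, and the argument fails.
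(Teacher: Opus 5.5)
Your proof is correct and complete. The paper gives no argument for this statement: it imports it from \cite[Proposition 2.2]{FilHom}, and it only applies it to self-adjoint Jacobi operators, in Lemma~\ref{lem: local spectral loss}, Theorem~\ref{Th: Homogeneity} and Proposition~\ref{prop:reflectionless-full-spectrum}. Your resolvent/Neumann-series argument is therefore a self-contained substitute for the citation rather than a competing route.

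The other standard proof uses approximate eigenvectors. For $\lambda\in\sigma(B)$, choose a unit vector $\phi$ with $\|(B-\lambda)\phi\|<\varepsilon$. Then $\|(A-\lambda)\phi\|<\varepsilon_0+\varepsilon$, and self-adjointness of $A$ gives $\operatorname{dist}(\lambda,\sigma(A))\le\|(A-\lambda)\phi\|$. That version also uses normality of $B$, to guarantee the approximate eigenvector exists. Yours does not: Step~1 is the only place a structural hypothesis enters. Hence the one-sided inclusion $\sigma(B)\subseteq\{z:\operatorname{dist}(z,\sigma(A))\le\|A-B\|_{op}\}$ holds for any bounded $B$ as soon as $A$ is normal.

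A minor point: in Step~3 you do not need the distance to $\sigma(A)$ to be attained. The inequality $\operatorname{dist}(\lambda,\sigma(A))<\varepsilon$ already gives a point of $\sigma(A)$ within $\varepsilon$ of $\lambda$, by the definition of the infimum.
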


\begin{remark}
    The Pastur--Tkachenko condition $\|\sj-\sj_{q_n}\|\lesssim e^{-\eta q_{n+1}}$ immediately gives the summability used in \cite{FilHom}. The weaker class used here only gives decay at the previous scale $q_{n-1}$. The proof below therefore uses the folded band scale
    \begin{equation}
        e^{-c_1q_{n-1}}\left(\frac{q_{n-1}}{q_n}\right)^2
    \end{equation}
    instead of the crude scale $e^{-c_1q_n}$.
\end{remark}

\begin{lemma}[Floquet band structure]\label{lem:floquet-band-structure}
Let $J$ be a scalar $p$-periodic Jacobi matrix with positive
off-diagonal coefficients, let $J(\theta)$ denote its $p\times p$
Floquet fiber with multiplier $e^{2\pi i\theta}$, and let
$\Delta_p$ be the trace of the determinant-one monodromy matrix.
Then
\begin{equation}\label{eq:floquet-spectrum-discriminant}
    \sigma(J)=\{E\in\R:|\Delta_p(E)|\leq2\}.
\end{equation}
If
\[
    e_1\leq e_2\leq\cdots\leq e_{2p}
\]
is the combined list of the eigenvalues of $J(0)$ and
$J(1/2)$, in increasing order and with multiplicity, then
\begin{equation}\label{eq:ordered-band-edges}
    \sigma(J)=\bigcup_{j=1}^p[e_{2j-1},e_{2j}].
\end{equation}
Thus closed gaps correspond to equal consecutive edge values.  On the
interior of every band, $\Delta_p'$ does not vanish, and the band has a
monotone $C^1$ parametrization $\lambda_j:(0,1/2)\to\R$ satisfying
\begin{equation}\label{eq:floquet-parametrization}
    \Delta_p(\lambda_j(\theta))=2\cos(2\pi\theta).
\end{equation}
The parametrization extends continuously to $[0,1/2]$, and its
interior derivative has continuous one-sided extensions at both endpoints.
No two-sided differentiability of the globally ordered eigenvalue list is
asserted at a closed gap.
\end{lemma}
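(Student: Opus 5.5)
The plan is to derive everything from the transfer matrix formalism and the Floquet--Bloch decomposition; this is the standard route.

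\emph{Transfer matrices and the discriminant.} Write the eigenvalue equation $J u=Eu$ as $(u_{n+1},a_n u_n)^T=T_n(E)(u_n,a_{n-1}u_{n-1})^T$, where each $T_n$ has determinant one. Let $M_p(E)=T_p\cdots T_1$ be the monodromy matrix and $\Delta_p=\operatorname{tr}M_p$, a real polynomial of degree $p$ with leading coefficient $(a_1\cdots a_p)^{-1}>0$.

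\emph{Fibers and the spectrum.} Next, unitarily decompose $\ell^2(\Z)$ by the discrete Floquet (Bloch) transform. This gives $J\cong\int^\oplus_{[0,1)}J(\theta)\,d\theta$, where $J(\theta)$ is the $p\times p$ Hermitian matrix with corner entries $a_pe^{\pm2\pi i\theta}$. Since $\theta\mapsto J(\theta)$ is continuous, $\sigma(J)=\bigcup_\theta\sigma(J(\theta))$. Moreover, $E\in\sigma(J(\theta))$ if and only if there is a solution with $u_{n+p}=e^{2\pi i\theta}u_n$, that is, if and only if $e^{2\pi i\theta}$ is an eigenvalue of $M_p(E)$. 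Because $\det M_p=1$, this happens if and only if $\Delta_p(E)=2\cos 2\pi\theta$. Taking the union over $\theta$ gives \eqref{eq:floquet-spectrum-discriminant}. This step also shows that $\det(E-J(\theta))$ equals $c\,(\Delta_p(E)-2\cos2\pi\theta)$ with $c=a_1\cdots a_p$; checking this constant comes from comparing leading coefficients.

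\emph{Band structure.} The key step is that every solution of $\Delta_p(E)=t$ is real and simple for $t\in(-2,2)$. The reason is that $J(\theta)$ with $\theta\in(0,1/2)$ is Hermitian with simple spectrum. Simplicity holds because an eigenvector is determined by $u_1$ via the recursion, and there are $p$ roots counted with multiplicity; I would cite or recheck this via the interlacing with $\theta$ and $-\theta$ giving the same polynomial. So $\Delta_p-t$ has $p$ distinct real roots for every $|t|<2$. By Rolle's theorem, the critical points of $\Delta_p$ lie strictly between consecutive roots, and $|\Delta_p|\ge2$ there. Hence $\Delta_p'\neq0$ on $\{|\Delta_p|<2\}$. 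Standard oscillation of the degree-$p$ polynomial then shows that $\Delta_p^{-1}([-2,2])$ consists of $p$ closed intervals, on each of which $\Delta_p$ is monotone from $\pm2$ to $\mp2$.

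\emph{Edges and parametrization.} The endpoints of these intervals are the roots of $\Delta_p=2$, which are the eigenvalues of $J(0)$, together with the roots of $\Delta_p=-2$, which are the eigenvalues of $J(1/2)$. Counting with multiplicity gives $2p$ values. Ordering them shows that the $j$-th band is $[e_{2j-1},e_{2j}]$, which is \eqref{eq:ordered-band-edges}. A double root, where $\Delta_p'=0$ with $|\Delta_p|=2$, appears as $e_{2j}=e_{2j+1}$, which is a closed gap. On each band, define $\lambda_j(\theta)=(\Delta_p|_{\text{band}})^{-1}(2\cos2\pi\theta)$. By the inverse function theorem this map is $C^1$ on $(0,1/2)$, because $\Delta_p'\neq0$ there. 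It is also monotone, since the composition of monotone maps is monotone, and it extends continuously to $[0,1/2]$.

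\emph{Main obstacle: one-sided derivatives at the endpoints.} The derivative is $\lambda_j'(\theta)=-4\pi\sin(2\pi\theta)/\Delta_p'(\lambda_j)$. At an open edge, $\Delta_p'\neq0$, so the limit is $0$. At a closed gap, $\Delta_p'$ has a simple zero, since $\Delta_p^2-4$ has a root of multiplicity at most two there. A Taylor expansion $\Delta_p-2\approx\tfrac12\Delta_p''(E_0)(E-E_0)^2$ against $2\cos2\pi\theta-2\approx-4\pi^2\theta^2$ gives a finite nonzero one-sided limit. This limit is typically different on the two sides, which is consistent with the caveat in the statement that no two-sided differentiability is asserted.
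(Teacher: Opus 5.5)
Your overall route is the paper's: transfer matrices, Floquet fibers, simplicity of the fiber spectrum for $0<\theta<1/2$, hence $\Delta_p'\neq0$ on $\{|\Delta_p|<2\}$, and band edges from $\Delta_p\mp2$. However, the justification you give for the key step fails.

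\textbf{The simplicity step.} An eigenvector of the cyclic fiber $J(\theta)$ is not determined by $u_1$. The first row of $J(\theta)$ couples $u_1$ to both $u_2$ and $u_p$, so you need two consecutive entries (Cauchy data), not one. Your argument never uses $0<\theta<1/2$, so it would also show that $J(0)$ and $J(1/2)$ have simple spectrum. That is false exactly at the closed gaps the statement allows. For example, take $a_n\equiv1$, $b_n\equiv0$ regarded as $2$-periodic: then $J(1/2)$ is the zero matrix, and $(0,1)^T$ is an eigenvector with $u_1=0$.

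The missing idea is the one the paper uses. For $z=e^{2\pi i\theta}$ with $0<\theta<1/2$ one has $z\neq z^{-1}$. So the determinant-one matrix $M_p(E)$ has two distinct eigenvalues $z,z^{-1}$, and its $z$-eigenspace is one-dimensional. The Cauchy-data map embeds the $E$-eigenspace of $J(\theta)$ into it. Hence the geometric multiplicity is one, and since $J(\theta)$ is Hermitian, so is the algebraic multiplicity. Your appeal to ``interlacing with $\theta$ and $-\theta$'' does not supply this.

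\textbf{The determinant identity.} The identity $\det(E-J(\theta))=a_1\cdots a_p\,(\Delta_p(E)-2\cos2\pi\theta)$ does not follow from ``$E\in\sigma(J(\theta))$ iff $\Delta_p(E)=2\cos2\pi\theta$'' plus a comparison of leading coefficients. Equal zero sets say nothing about multiplicities.

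For $|t|<2$ you can avoid the identity: counting $p$ distinct eigenvalues against $\deg\Delta_p=p$ already forces the roots of $\Delta_p-t$ to be simple. At $\theta=0,1/2$, however, you genuinely need multiplicities. They are what identify the eigenvalues of $J(0)$ and $J(1/2)$, with multiplicity, with the roots of $\Delta_p\mp2$, with multiplicity, and the ordered-edge formula depends on that. They also underlie your unproved claim that a closed-gap root of $\Delta_p^2-4$ has multiplicity at most two.

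There are two ways to repair this:
\begin{enumerate}
\item Expand the cyclic determinant directly, as the paper indicates.
\item Once simplicity is correctly proved, note that for $0<\theta<1/2$ both sides are degree-$p$ polynomials with the same leading coefficient and the same $p$ distinct roots, so they agree. Then pass to $\theta=0,1/2$ by continuity in $\theta$.
\end{enumerate}
The multiplicity-two bound then follows, since Cauchy data embed every eigenspace of $J(0)$ or $J(1/2)$ into $\mathbb{C}^2$.

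\textbf{The endpoint derivatives.} With these repairs, your Taylor-expansion treatment of the one-sided derivatives is correct and is a genuinely different route from the paper's. At open edges the limit is $0$; at closed gaps it is finite and nonzero via $\Delta_p''(E_0)\neq0$. This is more elementary than the paper's appeal to Rellich's theorem. The paper's route avoids any computation at the double root but imports analytic perturbation theory.
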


\begin{proof}
These are standard facts of scalar periodic Floquet theory; see also
\cite[Theorem 2.3]{FilHom}.  For completeness, we give the details
needed below.  The Floquet boundary condition with multiplier
$z=e^{2\pi i\theta}$ is equivalent to saying that $z$ is an
eigenvalue of the monodromy matrix.  Since the latter has determinant
one, this is equivalent to
\[
    \Delta_p(E)=z+z^{-1}=2\cos(2\pi\theta).
\]
Equivalently, the standard cyclic determinant identity gives
\begin{equation}\label{eq:cyclic-determinant-identity}
    \det(E-J(\theta))
    =c_p\bigl(\Delta_p(E)-2\cos(2\pi\theta)\bigr),
\end{equation}
where $c_p\neq0$ is independent of $E$ and $\theta$.  (The identity
follows directly by expanding the cyclic tridiagonal determinant; its
constant is immaterial here.)  Taking the union over $\theta$ proves
\eqref{eq:floquet-spectrum-discriminant}.

At $\theta=0$ and $\theta=1/2$, identity
\eqref{eq:cyclic-determinant-identity} shows that the periodic and
antiperiodic eigenvalues, counted with multiplicity, are exactly the
roots of $\Delta_p(E)-2$ and $\Delta_p(E)+2$.  The polynomial
$\Delta_p$ has degree $p$ and $|\Delta_p(E)|>2$ for $|E|$ large.
Moreover, up to a positive constant,
\[
    \Delta_p(E)^2-4=\prod_{j=1}^{2p}(E-e_j),
\]
with roots listed according to multiplicity.  Since this polynomial is
positive near both ends of the real line, its nonpositive set is
exactly the union of the alternating intervals in
\eqref{eq:ordered-band-edges}.  A double root produces a closed gap and
is retained twice in the ordered list.

Fix $0<\theta<1/2$ and put $z=e^{2\pi i\theta}$.  Since $z\neq z^{-1}$,
the $z$-eigenspace of the determinant-one monodromy matrix is
one-dimensional.  Cauchy data identify the eigenspace of the Floquet
fiber $J(\theta)$ at energy $E$ with this monodromy eigenspace.
Therefore every eigenvalue of the Hermitian matrix $J(\theta)$ is
simple.  By \eqref{eq:cyclic-determinant-identity}, every root of
$\Delta_p(E)-2\cos(2\pi\theta)$ is consequently simple.  Hence
$\Delta_p'(E)\neq0$ whenever $|\Delta_p(E)|<2$, that is, in the
interior of each individual band away from its endpoints.  The
derivative has constant sign on each such band interior, so
$\Delta_p$ can be inverted
there.  This gives the monotone parametrization
\eqref{eq:floquet-parametrization}; continuity supplies the endpoint
extension.  The Floquet fibers form a finite-dimensional real-analytic
Hermitian family.  By the standard finite-dimensional Rellich theorem \cite{Rellich},
near an endpoint their eigenvalues admit real-analytic labels.  Simplicity
on the open half-zone makes each band branch agree there with one of these
labels.  Consequently, its derivative extends continuously from that side,
including when a gap is closed.
\end{proof}

\begin{lemma}[Uniform periodic band velocity]\label{lem: uniform periodic band velocity}
For every $R\geq1$, there are constants $c_R>0$ and
$c_1=c_1(R)>0$ with the following property.  If
$J\in\mathcal J(R)$ is $p$-periodic and $\lambda_j(\theta)$ is one
of the Floquet band parametrizations from Lemma
\ref{lem:floquet-band-structure}, then, for $0<\theta<1/2$,
\begin{equation}\label{eq: uniform periodic band velocity}
    |\dot\lambda_j(\theta)|
    \geq c_R e^{-c_1p}|\sin(2\pi\theta)|.
\end{equation}
The same estimate holds at an endpoint in the one-sided limiting
sense.
\end{lemma}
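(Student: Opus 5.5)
Differentiating the parametrization identity \eqref{eq:floquet-parametrization} in $\theta$ gives
\[
    \Delta_p'(\lambda_j(\theta))\,\dot\lambda_j(\theta) = -4\pi\sin(2\pi\theta),
\]
so
\[
    |\dot\lambda_j(\theta)| = \frac{4\pi|\sin(2\pi\theta)|}{|\Delta_p'(\lambda_j(\theta))|}.
\]
The whole task is therefore an upper bound $|\Delta_p'(E)|\le C_R e^{c_1 p}$ for $E$ in the band $[e_{2j-1},e_{2j}]$. Lemma \ref{lem:floquet-band-structure} guarantees $\Delta_p'\neq 0$ in the band interior, so the division is legitimate there. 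At an endpoint, the one-sided limit follows from the continuous one-sided extension of $\dot\lambda_j$ asserted in the same lemma, since the bound holds uniformly on $(0,1/2)$.

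To bound $\Delta_p'$, I would write the monodromy matrix as a product of transfer matrices
\[
    T_n(E)=\frac{1}{a_n}\begin{pmatrix} E-b_n & -a_{n-1}\\ a_n & 0\end{pmatrix},
\]
normalized so that the product over one period has determinant one. This uses $\prod_{n=1}^p a_{n-1}/a_n=1$ by periodicity, or equivalently the determinant-one normalization referred to in the lemma. For $J\in\mathcal J(R)$ and $E\in\sigma(J)\subset[-3R,3R]$, each factor has norm at most $K_R:=(4R+R)R$, or some similar explicit constant. Hence the monodromy $M_p(E)=T_p\cdots T_1$ satisfies $\|M_p(E)\|\le K_R^p$.

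For the derivative, the product rule gives
\[
    \frac{d}{dE}M_p=\sum_{k=1}^p T_p\cdots T_{k+1}\,T_k'\,T_{k-1}\cdots T_1,
\]
with $\|T_k'\|=1/a_k\le R$. This yields $\|M_p'(E)\|\le pRK_R^{p-1}$, and therefore
\[
    |\Delta_p'(E)|=|\operatorname{tr}M_p'(E)|\le 2pRK_R^{p-1}\le C_R e^{c_1p},
\]
with $c_1=\log K_R+1$ absorbing the factor $p$. Substituting this into the identity above gives the claim with $c_R=4\pi/C_R$.

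There is no real obstacle; the only care needed is bookkeeping of the normalization. The transfer matrices $T_n$ need not individually have determinant one, so I should either use the symmetric normalization with factors $1/\sqrt{a_{n-1}a_n}$ or note that the product differs from the determinant-one monodromy by a scalar bounded by $R^{2}$. Either way, only $R$-dependent constants enter, and both $c_R$ and $c_1$ depend only on $R$.

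An alternative route avoids derivatives of products. Since $\Delta_p^2-4=C\prod_{i=1}^{2p}(E-e_i)$ with leading coefficient $C=\left(\prod_n a_n\right)^{-2}\le R^{2p}$, one can bound $\Delta_p'$ via Cauchy estimates on a disk of radius $1$ around $E$, using $|\Delta_p|\le C'^p$ there. I would keep this as a backup to the transfer-matrix argument.
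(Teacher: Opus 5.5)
Your proposal is correct and matches the paper's proof: differentiate $\Delta_p(\lambda_j(\theta))=2\cos(2\pi\theta)$, bound $|\Delta_p'(E)|\le 2pK_R^p\le e^{c_1p}$ on $\sigma(J)\subset[-3R,3R]$ by the product rule for the monodromy, and treat the endpoints by one-sided limits. The only difference is cosmetic. The paper uses the transfer matrices $A_n(E)=a_n^{-1}\begin{pmatrix}E-b_n&-1\\ a_n^2&0\end{pmatrix}$ acting on $(u_{n+1},a_nu_n)$; each of these already has determinant one, and they are conjugate to your $T_n$ via $\operatorname{diag}(1,a_n)$, so they give the same trace. Your period product also has determinant exactly one, so the normalization caveat you raise is unnecessary.
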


\begin{proof}
For the Jacobi equation $Ju=Eu$, use the transfer relation
\[
    \binom{u_{n+1}}{a_nu_n}
    =A_n(E)\binom{u_n}{a_{n-1}u_{n-1}},
    \qquad
    A_n(E)=\frac1{a_n}
    \begin{pmatrix}
        E-b_n&-1\\
        a_n^2&0
    \end{pmatrix}.
\]
Thus $\det A_n(E)=1$ and
\[
    M_p(E)=A_p(E)\cdots A_1(E),
    \qquad
    \Delta_p(E)=\operatorname{tr}M_p(E).
\]
Since $\|J\|\leq3R$, every $E\in\sigma(J)$ satisfies
$|E|\leq3R$.  The bounds $R^{-1}\leq a_n\leq R$ and
$|b_n|\leq R$ therefore give a constant $K_R\geq1$, independent of
$p$, such that
\[
    \|A_n(E)\|+\|\partial_EA_n(E)\|\leq K_R
    \qquad(E\in\sigma(J)).
\]
By the product rule and $|\operatorname{tr}X|\leq2\|X\|$,
\[
    |\Delta_p'(E)|\leq 2pK_R^p\leq e^{c_1p}
    \qquad(E\in\sigma(J))
\]
after increasing $c_1=c_1(R)$.  Lemma
\ref{lem:floquet-band-structure} justifies differentiating
\eqref{eq:floquet-parametrization} on a band interior.  Hence
\[
    |\dot\lambda_j(\theta)|
    =\frac{4\pi|\sin(2\pi\theta)|}
           {|\Delta_p'(\lambda_j(\theta))|}
    \geq 4\pi e^{-c_1p}|\sin(2\pi\theta)|.
\]
Reducing the constant gives the stated form.  The endpoint statement
follows by taking one-sided limits.
\end{proof}

\begin{lemma}[Stability of ordered periodic band edges]
\label{lem:ordered-edge-stability}
Let $K$ and $L$ be scalar $q$-periodic Jacobi matrices, and suppose
\[
    \|K-L\|\leq\gamma.
\]
Let
\[
    e_1(K)\leq\cdots\leq e_{2q}(K),
    \qquad
    e_1(L)\leq\cdots\leq e_{2q}(L)
\]
be their ordered periodic--antiperiodic edge lists, with multiplicity.
Then
\begin{equation}\label{eq:edge-list-stability}
    |e_j(K)-e_j(L)|\leq\gamma,
    \qquad j=1,\dots,2q.
\end{equation}
Consequently, the length of the $j$th band of $L$ is at least the
length of the $j$th band of $K$ minus $2\gamma$.
\end{lemma}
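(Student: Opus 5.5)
The plan is to reduce the edge stability to Weyl's eigenvalue perturbation inequality for Hermitian matrices, applied fiberwise to the Floquet fibers $K(\theta)$ and $L(\theta)$ at $\theta=0$ and $\theta=1/2$. The first step is to check that $\|K(\theta)-L(\theta)\|\leq\|K-L\|\leq\gamma$ for each fixed $\theta$. Since $K-L$ is itself $q$-periodic, it is unitarily equivalent to the direct integral over $\theta$ of $K(\theta)-L(\theta)$ via the Floquet--Bloch transform. Its operator norm is therefore the essential supremum over $\theta$ of $\|K(\theta)-L(\theta)\|$. The map $\theta\mapsto K(\theta)-L(\theta)$ is continuous (trigonometric polynomial entries), so the bound holds for every $\theta$, in particular for $\theta=0,1/2$.

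Alternatively, one can avoid the direct integral with a direct test-vector argument. Take $v\in\mathbb C^q$, extend it Bloch-periodically with multiplier $e^{2\pi i\theta}$, and truncate smoothly to $N$ periods. Then $\langle (K-L)\phi_N,\phi_N\rangle/\|\phi_N\|^2\to\langle (K(\theta)-L(\theta))v,v\rangle/\|v\|^2$, and this Rayleigh-quotient bound suffices for Hermitian matrices.

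The second step applies Weyl's inequality. It gives $|\mu_k(K(\theta))-\mu_k(L(\theta))|\leq\gamma$ for the ordered eigenvalues at $\theta=0$, and separately at $\theta=1/2$. The remaining issue, which I expect to be the main obstacle, is passing from the two separately ordered lists to the merged ordered list $e_1\leq\dots\leq e_{2q}$. For this I will use the elementary fact that merging sorted lists is $1$-Lipschitz in $\ell^\infty$. Precisely, if multisets $X,Y$ of equal size admit a bijection moving each point by at most $\gamma$, then their sorted versions satisfy $|x_{(j)}-y_{(j)}|\leq\gamma$ for every $j$.

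This fact I would prove by counting. The number of elements of $Y$ that are $\leq t$ is at least the number of elements of $X$ that are $\leq t-\gamma$, for every $t$. Taking $t=x_{(j)}+\gamma$ gives $y_{(j)}\leq x_{(j)}+\gamma$, and symmetry gives the reverse inequality. The bijection is supplied by Weyl, matching $\mu_k(K(0))$ with $\mu_k(L(0))$ and $\mu_k(K(1/2))$ with $\mu_k(L(1/2))$. This yields \eqref{eq:edge-list-stability}, and multiplicities pose no problem because everything is counted as multisets.

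Finally, by \eqref{eq:ordered-band-edges} the $j$th band of each operator is $[e_{2j-1},e_{2j}]$. Hence its length satisfies $e_{2j}(L)-e_{2j-1}(L)\geq (e_{2j}(K)-\gamma)-(e_{2j-1}(K)+\gamma)$, which is the band length of $K$ minus $2\gamma$.
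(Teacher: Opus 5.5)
Your proposal is correct and is essentially the paper's proof. Both arguments bound each fiber by $\max_\theta\|K(\theta)-L(\theta)\|\leq\|K-L\|$ using the direct-integral decomposition, apply minimax (Weyl) at $\theta=0,1/2$ to pair the combined edge multisets within $\gamma$, sort the merged lists (your counting step is the contrapositive form of the paper's pigeonhole step), and read off band lengths from $[e_{2j-1},e_{2j}]$. Your optional test-vector route to the fiber bound is only a harmless variant of the direct-integral step.
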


\begin{proof}
Use the same $q$-site cell for the Floquet decompositions of $K$ and
$L$.  Since $K-L$ is $q$-periodic, the direct-integral decomposition
and continuity of the fibers give
\[
    \max_{\theta\in\T}\|K(\theta)-L(\theta)\|
    =\|K-L\|\leq\gamma.
\]
The minimax principle therefore shows that each ordered eigenvalue of the
periodic fiber changes by at most $\gamma$, and the same is true for the
antiperiodic fiber.  Thus the two combined edge multisets admit a
bijection under which paired values differ by at most $\gamma$.

We record the elementary sorting argument.  If
$x_1\leq\cdots\leq x_N$ and $y_1\leq\cdots\leq y_N$ admit such a
bijection, then $|x_j-y_j|\leq\gamma$ for every $j$.  Indeed, if
$y_j<x_j-\gamma$, then the first $j$ values of the $y$-list must be
paired with $j$ values of the $x$-list strictly smaller than $x_j$,
which is impossible.  Interchanging $x$ and $y$ gives the opposite
inequality.  This proves \eqref{eq:edge-list-stability}, including at
double edges.  The last assertion follows from
Lemma \ref{lem:floquet-band-structure}, since the $j$th band has
endpoints $e_{2j-1}$ and $e_{2j}$.
\end{proof}

\begin{lemma}[Folded subband lower bound]\label{lem: folded subband lower bound}
Let $J^{(p)}\in\mathcal J(R)$ be $p$-periodic, and let $q=\ell p$
with $\ell\in\N$. Regard $J^{(p)}$ artificially as a $q$-periodic
Jacobi matrix, denoted by $\widetilde J^{(q)}$. Then every
$q$-periodic spectral band of $\widetilde J^{(q)}$ has length at
least
\begin{equation}\label{eq: artificial folded lower bound}
    c_R e^{-c_1p}\ell^{-2},
\end{equation}
where $c_1=c_1(R)$ and $c_R>0$ depend only on $R$.  Bands are counted
with multiplicity, so closed gaps are allowed.

Consequently, if $J^{(q)}\in\mathcal J(R)$ is $q$-periodic and
\begin{equation}
    \|J^{(q)}-\widetilde J^{(q)}\|\leq \gamma
\end{equation}
with
\begin{equation}
    \gamma\leq \frac14 c_R e^{-c_1p}\ell^{-2},
\end{equation}
then every $q$-periodic spectral band of $J^{(q)}$ has length at least
\begin{equation}\label{eq: perturbed folded lower bound}
    \frac12 c_R e^{-c_1p}\ell^{-2}.
\end{equation}
\end{lemma}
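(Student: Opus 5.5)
The plan is to read off the $q$-periodic bands of $\widetilde J^{(q)}$ directly from the $p$-periodic Floquet parametrization, and then to transfer the resulting lower bound to $J^{(q)}$ by Lemma \ref{lem:ordered-edge-stability}.

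\emph{Folding.} The $q$-periodic Floquet fiber of $\widetilde J^{(q)}$ with multiplier $e^{2\pi i\phi}$ is unitarily equivalent to the direct sum of the $p$-periodic fibers $J^{(p)}(\theta)$ over those $\theta$ with $\ell\theta\equiv\phi \pmod 1$, namely $\theta=(\phi+k)/\ell$, $k=0,\dots,\ell-1$. This follows by decomposing a $q$-cell vector according to its behaviour under shift by $p$; equivalently, $\Delta_q=T_\ell(\Delta_p/2)\cdot 2$ with $T_\ell$ a Chebyshev polynomial. Hence each $p$-band $[e_{2j-1},e_{2j}]$, parametrized by $\lambda_j$ on $[0,1/2]$, is cut at the points $\lambda_j(k/(2\ell))$, $k=0,\dots,\ell$, into exactly $\ell$ $q$-bands. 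The $q$-band edges are the values $\lambda_j(k/(2\ell))$, because $\phi\in\{0,1/2\}$ corresponds to $\theta\in\frac{1}{2\ell}\Z$. Any interior cut point appears twice in the ordered $q$-edge list, which gives a closed gap. Counting yields $p\ell=q$ bands in total, matching the $2q$ edges, so the ordered list of Lemma \ref{lem:floquet-band-structure} for $\widetilde J^{(q)}$ consists precisely of these subintervals, with multiplicity. This identification of the ordered $q$-edge list with the folded cut points, including the multiplicity bookkeeping, is the step I expect to need the most care. I would carry it out via the Chebyshev identity: $\Delta_q(E)=\pm2$ if and only if $\Delta_p(E)=2\cos(\pi k/\ell)$.

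\emph{Length estimate.} The $k$-th subband, for $k=0,\dots,\ell-1$, has length
\[
|\lambda_j((k+1)/(2\ell))-\lambda_j(k/(2\ell))|=\int_{k/(2\ell)}^{(k+1)/(2\ell)}|\dot\lambda_j(\theta)|\,d\theta ,
\]
since $\lambda_j$ is monotone and $C^1$ on the interior. By Lemma \ref{lem: uniform periodic band velocity}, this is at least
\[
c_Re^{-c_1p}\int_{k/(2\ell)}^{(k+1)/(2\ell)}\sin(2\pi\theta)\,d\theta .
\]
The worst case is the end intervals, where $\sin(2\pi\theta)\ge 4\theta$ on $[0,1/4]$ (and symmetrically near $1/2$). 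There the integral is at least $\int_0^{1/(2\ell)}4\theta\,d\theta=\frac{1}{2\ell^2}$. Intervals away from the ends give a larger lower bound. After shrinking $c_R$, this proves \eqref{eq: artificial folded lower bound}.

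\emph{Perturbation.} Apply Lemma \ref{lem:ordered-edge-stability} with $K=\widetilde J^{(q)}$ and $L=J^{(q)}$, both viewed as $q$-periodic. Each $q$-band of $J^{(q)}$ then has length at least $c_Re^{-c_1p}\ell^{-2}-2\gamma$. Under the hypothesis on $\gamma$ this is at least $\frac12 c_Re^{-c_1p}\ell^{-2}$, which gives \eqref{eq: perturbed folded lower bound}.
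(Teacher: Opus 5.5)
Your proposal is correct and is essentially the paper's proof: Chebyshev folding $\Delta_q=2T_\ell(\Delta_p/2)$ cuts each $p$-band at $\theta=r/(2\ell)$. The band-velocity bound of Lemma \ref{lem: uniform periodic band velocity} is then integrated over each subinterval with the end intervals as the worst case, and Lemma \ref{lem:ordered-edge-stability} handles the perturbation. The differences are cosmetic. You bound the end integral by $\sin(2\pi\theta)\geq 4\theta$, which needs $\ell\geq2$, with $\ell=1$ trivial as in the paper, instead of computing $(1-\cos(\pi/\ell))/(2\pi)$ exactly; and you spell out the edge-count and multiplicity bookkeeping that the paper leaves implicit.
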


\begin{proof}
Let $\Delta_p$ be the $p$-periodic discriminant of $J^{(p)}$, and
fix one of its band parametrizations $E=\lambda_j(\theta)$ from
Lemma \ref{lem:floquet-band-structure}.  By Lemma
\ref{lem: uniform periodic band velocity},
\begin{equation}\label{eq: old scale band velocity}
    |\dot\lambda_j(\theta)|
    \geq c_Re^{-c_1p}|\sin(2\pi\theta)|.
\end{equation}
When $J^{(p)}$ is regarded as periodic with period $q=\ell p$, its $q$-step
monodromy is the $\ell$th power of its $p$-step monodromy.  Hence
\begin{equation}\label{eq: Chebyshev folding}
    \Delta_q(E)=2T_\ell\!\left(\frac{\Delta_p(E)}2\right),
\end{equation}
where $T_\ell$ is the Chebyshev polynomial.  The artificial
$q$-periodic edges inside the fixed $p$-band therefore occur at
\[
    \frac{\Delta_p(E)}2=\cos\left(\frac{\pi r}{\ell}\right),
    \qquad r=0,1,\dots,\ell,
\]
or, equivalently, at $\theta=r/(2\ell)$.  Thus the $p$-band is
subdivided, with multiplicity, into the $\ell$ artificial $q$-bands
\[
    \lambda_j\!\left(
       \left[\frac{r}{2\ell},\frac{r+1}{2\ell}\right]
    \right),
    \qquad r=0,\dots,\ell-1.
\]
Their lengths are bounded below by
\begin{align}
    c_Re^{-c_1p}
    \int_{r/(2\ell)}^{(r+1)/(2\ell)}
       |\sin(2\pi\theta)|\,d\theta.
\end{align}
The minimum of these integrals occurs at an endpoint interval and,
for $\ell\geq2$, equals
\[
    \frac{1-\cos(\pi/\ell)}{2\pi}\geq c\ell^{-2}.
\]
The case $\ell=1$ is immediate.  Absorbing the absolute constant into
$c_R$ proves \eqref{eq: artificial folded lower bound}.

Apply Lemma \ref{lem:ordered-edge-stability} to
$K=\widetilde J^{(q)}$ and $L=J^{(q)}$.  Every band length can decrease
by at most $2\gamma$, so every band of $J^{(q)}$ has length at least
\[
    c_R e^{-c_1p}\ell^{-2}-2\gamma.
\]
The assumed bound on $\gamma$ gives
\eqref{eq: perturbed folded lower bound}.
\end{proof}

\begin{lemma}[Local spectral loss]\label{lem: local spectral loss}
Let $J_n$ and $J_{n-1}$ be periodic Jacobi matrices with spectra $\Sigma_n$ and $\Sigma_{n-1}$. Set
\begin{equation}
    \gamma_n:=\|J_n-J_{n-1}\|
\end{equation}
and suppose that every spectral band of $\Sigma_n$ has length at least $m_n>0$. Then, for every bounded interval $I\subset\R$,
\begin{equation}\label{eq: local spectral loss}
    |I\cap(\Sigma_{n-1}\setminus\Sigma_n)|
    \leq C\left(\frac{|I|}{m_n}+1\right)\gamma_n,
\end{equation}
where $C$ is an absolute constant.
\end{lemma}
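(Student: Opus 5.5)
By Proposition \ref{prop: prop 2.2 in Filman} we have $d_H(\Sigma_{n-1},\Sigma_n)\leq\gamma_n$. Hence every point of $\Sigma_{n-1}$ lies within distance $\gamma_n$ of $\Sigma_n$. The plan is to describe $\Sigma_{n-1}\setminus\Sigma_n$ through the gaps of $\Sigma_n$ and then count how many gaps can meet $I$. Since $\Sigma_n$ is a finite union of closed bands, its complement is a finite union of open gaps plus two unbounded components. Each point $x\in\Sigma_{n-1}\setminus\Sigma_n$ lies in one such component $G$ and is within $\gamma_n$ of $\Sigma_n$, so it is within $\gamma_n$ of an endpoint of $G$. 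Consequently
\[
    \Sigma_{n-1}\setminus\Sigma_n\subseteq\bigcup_{G}\Bigl(\bigl(B_{\gamma_n}(\partial G)\bigr)\cap G\Bigr),
\]
and each component $G$ contributes a set of measure at most $2\gamma_n$, or $\gamma_n$ for the unbounded ones.

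Next we bound the number of gaps $G$ whose $\gamma_n$-collar near an endpoint can meet $I$. Any such collar meets $I$ only if the corresponding endpoint lies in the enlarged interval $I'=\mathscr B_{\gamma_n}(I)$. Each endpoint of a gap is also an endpoint of a band of $\Sigma_n$, and that band lies on the other side of the gap. Order the relevant gaps from left to right. Between two consecutive gaps whose endpoints lie in $I'$ there is at least one full band, of length at least $m_n$. Hence if $N$ gaps have an endpoint in $I'$, we get at least $N-1$ disjoint bands, each of length at least $m_n$. All but possibly the two extreme bands are contained in $I'$. This gives
\[
    (N-3)\,m_n\leq |I'|\leq |I|+2\gamma_n,
    \qquad\text{so}\qquad
    N\leq \frac{|I|+2\gamma_n}{m_n}+3.
\]
Multiplying by the per-gap loss $2\gamma_n$ gives
\[
    |I\cap(\Sigma_{n-1}\setminus\Sigma_n)|\leq 2\gamma_n\Bigl(\frac{|I|}{m_n}+\frac{2\gamma_n}{m_n}+3\Bigr).
\]

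The main obstacle is the leftover term $\gamma_n/m_n$, which must be absorbed into the constant. I would split into two cases. If $\gamma_n\leq m_n$, the term is at most $1$ and the bound follows with an absolute $C$. If $\gamma_n>m_n$, I would avoid counting altogether. The set $I\cap(\Sigma_{n-1}\setminus\Sigma_n)$ has measure at most $|I|$, and $|I|<|I|\gamma_n/m_n$ in this case, so the estimate holds trivially with $C=1$.

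Some care is needed with closed gaps counted with multiplicity. These are not genuine gaps of the set $\Sigma_n$, so they contribute nothing. The counting above uses only genuine complementary components, and distinct genuine gaps are separated by at least one band of length at least $m_n$. Taking the larger of the constants from the two cases yields \eqref{eq: local spectral loss} with an absolute constant $C$.
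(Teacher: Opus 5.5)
Your argument is correct and is essentially the paper's proof: the bound $d_H(\Sigma_{n-1},\Sigma_n)\le\gamma_n$ confines the lost set to $\gamma_n$-collars of edges of $\Sigma_n$, and the number of relevant edges in a $\gamma_n$-enlargement of $I$ is bounded using the minimal band length $m_n$. The differences are cosmetic. You index by complementary gaps rather than by band endpoints, and you absorb the enlargement term by splitting on $\gamma_n\le m_n$ versus $\gamma_n>m_n$, whereas the paper splits on $|I|\le 2\gamma_n$ versus $|I|>2\gamma_n$. Also, by convexity of $I'$, every band lying between two relevant gaps is already inside $I'$, so your $N-3$ could be $N-1$, though this is not needed.
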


\begin{proof}
By Proposition \ref{prop: prop 2.2 in Filman},
\begin{equation}
    d_H(\Sigma_{n-1},\Sigma_n)\leq\gamma_n.
\end{equation}
Hence every point of $\Sigma_{n-1}\setminus\Sigma_n$ lies within
distance $\gamma_n$ of an endpoint of a band of $\Sigma_n$. If
\(\gamma_n=0\), the spectra are equal and there is nothing to prove.
Assume henceforth that \(\gamma_n>0\), and put
\[
    I_{2\gamma_n}=\{x\in\R:\operatorname{dist}(x,I)<2\gamma_n\}.
\]
If \(|I|\leq2\gamma_n\), then the left-hand side of
\eqref{eq: local spectral loss} is at most \(|I|\leq2\gamma_n\), and
the desired estimate is immediate.  Suppose instead that
\(|I|>2\gamma_n\).  Then \(|I_{2\gamma_n}|<3|I|\).  Every endpoint
whose \(\gamma_n\)-neighborhood meets \(I\) belongs to
\(I_{2\gamma_n}\).  Since the band interiors are disjoint and every
band has length at least \(m_n\), all but at most two of the bands
having an endpoint in \(I_{2\gamma_n}\) are contained in
\(I_{2\gamma_n}\). Thus the
number of relevant bands, and hence the number of relevant endpoints,
is at most
\[
    C\left(\frac{|I|}{m_n}+1\right).
\]
The set \(I\cap(\Sigma_{n-1}\setminus\Sigma_n)\) is contained in the
corresponding endpoint neighborhoods, each of length
\(O(\gamma_n)\).  This proves \eqref{eq: local spectral loss}.
\end{proof}

\begin{theorem}[Homogeneity of the exponential class]\label{Th: Homogeneity}
For every $R\geq1$, there exists $\eta_0=\eta_0(R)>0$ such that if $\sj\in EC(\eta)\cap\mathcal J(R)$ with $\eta>\eta_0$, then $\Sigma=\sigma(\sj)$ is homogeneous in the sense of Definition \ref{def: homogeneity}.
\end{theorem}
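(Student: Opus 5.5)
We follow the strategy of \cite{FilHom}, replacing the crude band scale by the folded scale of Lemma \ref{lem: folded subband lower bound}. Write $J_n=\sj_{q_n}$, $\Sigma_n=\sigma(J_n)$, $\e_n=\|\sj-J_n\|$ and $\ell_n=q_n/q_{n-1}\geq2$. The $EC(\eta)$ condition gives $\e_n\leq q_{n+1}^{-3}e^{-\eta q_n}$ for $n\geq n_0$. Hence
\[
\gamma_n=\|J_n-J_{n-1}\|\leq \e_n+\e_{n-1}\leq 2q_n^{-3}e^{-\eta q_{n-1}}.
\]
The first step applies Lemma \ref{lem: folded subband lower bound} with $p=q_{n-1}$, $q=q_n$, $J^{(p)}=J_{n-1}$ and $J^{(q)}=J_n$. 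Its hypothesis $\gamma_n\leq\frac14 c_Re^{-c_1q_{n-1}}\ell_n^{-2}$ holds as soon as $\eta>c_1$ and $n$ is large, because $q_n^{-3}\leq\ell_n^{-2}q_n^{-1}$. So every band of $\Sigma_n$ has length at least
\[
m_n=\tfrac12c_Re^{-c_1q_{n-1}}\ell_n^{-2}.
\]
The ratio that matters is then
\[
\frac{\gamma_n}{m_n}\les \frac{\ell_n^2}{q_n^3}e^{-(\eta-c_1)q_{n-1}}\les q_n^{-1}q_{n-1}^{-2}e^{-(\eta-c_1)q_{n-1}}.
\]
For $\eta>\eta_0:=c_1+1$ this ratio is summable, and its tail is tiny. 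We also have $\sum_n\gamma_n<\infty$.

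The second step passes from the approximants to $\Sigma$. By Proposition \ref{prop: prop 2.2 in Filman}, $d_H(\Sigma_n,\Sigma)\leq\e_n\to0$. By Proposition \ref{prop: prop 2.1 in Filman}, applied with $B_k=\Sigma_k$ and $G_k=\overline{B_\delta(\lambda)}$, it suffices to bound $|\Sigma_k\cap B_\delta(\lambda)|$ from below uniformly in large $k$. Fix $\lambda\in\Sigma$ and a small $\delta$. Choose $N$ to be the scale where bands first become short relative to $\delta$, for instance the least $N\geq n_0$ with $m_{N+1}\leq\delta/4$. Two facts then control scale $N$. First, $\operatorname{dist}(\lambda,\Sigma_N)\leq\e_N$, which is much smaller than $\delta$. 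Second, every band of $\Sigma_N$ has length at least $m_N>\delta/4$. Together they give $|\Sigma_N\cap B_{\delta/2}(\lambda)|\geq c\delta$, since a band of length at least $\delta/4$ passes within $\e_N$ of $\lambda$.

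For $k>N$ we telescope. Lemma \ref{lem: local spectral loss} with $I=B_\delta(\lambda)$ gives
\[
|B_\delta(\lambda)\cap\Sigma_k|\geq|B_\delta(\lambda)\cap\Sigma_N|-C\sum_{n=N+1}^{k}\Big(\frac{2\delta}{m_n}+1\Big)\gamma_n.
\]
The sum $\delta\sum_{n>N}\gamma_n/m_n$ is at most a small multiple of $\delta$ once $n_0$ is large, by the first step. The remaining term $\sum_{n>N}\gamma_n$ must also be $o(\delta)$. Since $\gamma_n$ decays far faster than $m_n$, we have $\gamma_{N+1}\ll m_{N+1}$, and the choice of $N$ gives $m_{N+1}\les\delta$ from the minimality of $N$. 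Taking $\limsup_k$ and applying Proposition \ref{prop: prop 2.1 in Filman} yields $|\Sigma\cap B_\delta(\lambda)|\geq\tau\delta$, with $\delta_0$ set by the threshold $n_0$.

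The main obstacle is the bookkeeping in this scale selection. The definition of $N$ needs both $m_N\gtrsim\delta$, to get the initial mass, and a bound of $\sum_{n>N}\gamma_n$ by $\delta$. These must hold despite the possibly huge jump from $m_N$ to $m_{N+1}$, where the ratio $\ell_{N+1}$ is unbounded. We intend to resolve it by observing that $\gamma_{N+1}\les q_{N+1}^{-3}e^{-\eta q_N}$ is much smaller than $m_{N+1}$, which in turn lies below $\delta/4$. At scale $N$ itself we use only the band length $m_N>\delta/4$ and the Hausdorff bound $\e_N\ll m_N$. If this fails, the fallback is to use $\Sigma_N$'s band length at the scale $p=q_{N}$ directly via Lemma \ref{lem: uniform periodic band velocity}.
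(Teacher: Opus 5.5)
Your proposal is correct and follows essentially the same route as the paper. The shared steps are: the folded band scale $m_n\asymp e^{-c_1q_{n-1}}\ell_n^{-2}$ from Lemma \ref{lem: folded subband lower bound}, summability of $\gamma_n/m_n$, a scale $N$ with $m_{N+1}\leq\delta/4<m_N$ (the paper uses $m_{n+1}<\delta\leq m_n$), telescoping via Lemma \ref{lem: local spectral loss}, and passage to the limit by Proposition \ref{prop: prop 2.1 in Filman}. The only difference is that the paper starts from an interval $I\subset B_\delta(E)$ lying inside a single band of $\Sigma_n$, rather than from $B_\delta(\lambda)$ with an initial-mass bound.

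Two small points need tightening. First, the approximants lie only in $\mathcal J(2R)$, after discarding finitely many terms, so $c_1,c_R$ must be the constants for $2R$. Second, you need $\sum_{n>N}\gamma_n\ll m_{N+1}$, not just $\gamma_{N+1}\ll m_{N+1}$. This follows, as in the paper, from $\sum_{n>N}\gamma_n\leq m_{N+1}\sum_{n>N}\gamma_n/m_n$ once you check that $m_n$ is decreasing, so the fallback you mention is unnecessary.
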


\begin{proof}
Let
\[
    \sj_n\eqdef\sj_{q_n},\qquad
    \Sigma_n\eqdef\sigma(\sj_n),\qquad
    \varepsilon_n\eqdef\|\sj-\sj_n\|.
\]
Operator-norm convergence controls the Jacobi coefficients entrywise:
\[
    |b_k^{(n)}-b_k|
    =|\langle\delta_k,(\sj_n-\sj)\delta_k\rangle|
    \leq\|\sj_n-\sj\|,
\]
and similarly
$|a_k^{(n)}-a_k|\leq\|\sj_n-\sj\|$.  Since
\(\sj\in\mathcal J(R)\), after discarding finitely many terms so that
\(\varepsilon_n\leq(2R)^{-1}\), we have
\(\sj_n\in\mathcal J(2R)\) for every \(n\).  In what
follows, the constants \(c_1\) and \(c_R\) are those of Lemma
\ref{lem: folded subband lower bound} for the bound \(2R\). Put
\begin{equation}
    \ell_n\eqdef\frac{q_n}{q_{n-1}},
    \qquad
    \gamma_n\eqdef\|\sj_n-\sj_{n-1}\|.
\end{equation}
The proper divisibility $q_{n-1}\mid q_n$ implies
$q_n\geq2q_{n-1}$.  Since $\sj\in EC(\eta)$, after passing to a further tail and relabeling, we have
\begin{equation}\label{eq: weak EC consecutive gamma}
    \gamma_n\leq \frac{e^{-\eta q_{n-1}}}{q_n^3}.
\end{equation}
Indeed, on a sufficiently late tail,
\[
    q_{n+1}^3e^{\eta q_n}\varepsilon_n\leq\frac12
    \quad\text{for every }n.
\]
Consequently,
\[
\begin{split}
    \gamma_n
    &\leq\varepsilon_n+\varepsilon_{n-1}\\
    &\leq
    \frac12\frac{e^{-\eta q_n}}{q_{n+1}^3}
    +\frac12\frac{e^{-\eta q_{n-1}}}{q_n^3}
    \leq\frac{e^{-\eta q_{n-1}}}{q_n^3},
\end{split}
\]
which proves \eqref{eq: weak EC consecutive gamma}.

Apply Lemma \ref{lem: folded subband lower bound} with $p=q_{n-1}$ and $q=q_n$. Since
\begin{align}
    \frac{\gamma_n}{e^{-c_1q_{n-1}}\ell_n^{-2}}
    &\leq
    \frac{e^{-\eta q_{n-1}}q_n^{-3}}
         {e^{-c_1q_{n-1}}(q_{n-1}/q_n)^2} 
     =\frac{e^{-(\eta-c_1)q_{n-1}}}{q_nq_{n-1}^2},\label{eq: key ratio}
\end{align}
we have $\gamma_n=o(e^{-c_1q_{n-1}}\ell_n^{-2})$ whenever $\eta>c_1$.
Thus the perturbative hypothesis in Lemma
\ref{lem: folded subband lower bound} holds on a sufficiently late
tail. After decreasing \(c_R\) once, every band of $\Sigma_n$ has
length at least
\begin{equation}\label{eq: mn definition}
    m_n\eqdef c_R e^{-c_1q_{n-1}}\ell_n^{-2}
    =c_R e^{-c_1q_{n-1}}\left(\frac{q_{n-1}}{q_n}\right)^2.
\end{equation}
Moreover, \eqref{eq: key ratio} and \(q_n\geq2q_{n-1}\) give
\begin{equation}\label{eq: gamma over mn summable}
    \sum_{n=1}^\infty \frac{\gamma_n}{m_n}<\infty.
\end{equation}
Indeed, the summands are bounded by a constant times
\[
    \frac{e^{-(\eta-c_1)q_{n-1}}}{q_nq_{n-1}^2},
\]
which is summable. We may also discard finitely many terms so that
the sequence $m_n$ is strictly decreasing to zero. Since \(q_{n+1}\geq2q_n\)
and \(q_n\geq2q_{n-1}\), this follows from
\begin{equation}
    \frac{m_{n+1}}{m_n}
    =e^{-c_1(q_n-q_{n-1})}\frac{q_n^4}{q_{n+1}^2q_{n-1}^2}<1
\end{equation}
for all large $n$; more explicitly, the ratio is at most
\[
    \frac{q_n^2}{4}e^{-c_1q_n/2},
\]
which tends to zero.

By \eqref{eq: gamma over mn summable}, after passing to a final tail, we may assume that, for every $n$,
\begin{equation}\label{eq: small tails}
    C\sum_{k=n+1}^{\infty}\frac{\gamma_k}{m_k}<\frac1{100},
    \qquad
    C\sum_{k=n+1}^{\infty}\gamma_k<\frac{m_{n+1}}{100},
\end{equation}
where \(C\geq1\) is the constant from Lemma
\ref{lem: local spectral loss}. The second inequality follows from
the first one and the monotonicity of $m_n$, since
\[
    \sum_{k=n+1}^{\infty}\gamma_k
    \leq m_{n+1}\sum_{k=n+1}^{\infty}\frac{\gamma_k}{m_k}.
\]
Set
\begin{equation}
    \delta_0\eqdef m_1.
\end{equation}

Fix $E\in\Sigma$ and $0<\delta\leq\delta_0$. Choose $n$ so that
\begin{equation}\label{eq: choose n by delta}
    m_{n+1}<\delta\leq m_n.
\end{equation}
Since $\sj_n\to\sj$ in operator norm,
\begin{equation}
    d_H(\Sigma,\Sigma_n)\leq\|\sj-\sj_n\|\leq \sum_{k=n+1}^\infty\gamma_k<\frac{\delta}{100},
\end{equation}
where we used \eqref{eq: small tails} and \eqref{eq: choose n by delta}. Hence there exists $E_n\in\Sigma_n$ with
\begin{equation}\label{eq: En close E}
    |E-E_n|<\frac{\delta}{100}.
\end{equation}
Let $B_n$ be a spectral band of $\Sigma_n$ containing $E_n$. By \eqref{eq: mn definition},
\begin{equation}
    |B_n|\geq m_n\geq\delta.
\end{equation}
Together with \eqref{eq: En close E}, this shows that the intersection
\(B_\delta(E)\cap B_n\) has length greater than
\(99\delta/100\). We can therefore choose a closed interval
\begin{equation}
    I\subset B_\delta(E)\cap B_n
\end{equation}
with
\begin{equation}\label{eq: I lower bound}
    |I|=\frac{9}{10}\delta.
\end{equation}

For $N>n$, the elementary inclusion
\begin{equation}
    \Sigma_n\setminus\Sigma_N\subseteq\bigcup_{k=n+1}^N(\Sigma_{k-1}\setminus\Sigma_k)
\end{equation}
gives, since $I\subset\Sigma_n$,
\begin{align}
    |I\cap\Sigma_N|
    &\geq |I|-\sum_{k=n+1}^N |I\cap(\Sigma_{k-1}\setminus\Sigma_k)|\nonumber\\
    &\geq |I|-C|I|\sum_{k=n+1}^N\frac{\gamma_k}{m_k}-C\sum_{k=n+1}^N\gamma_k,
\end{align}
where the last step uses Lemma \ref{lem: local spectral loss}. By
\eqref{eq: small tails}, \eqref{eq: choose n by delta}, and
\eqref{eq: I lower bound}, the first error term is at most
\(|I|/100\leq\delta/100\), while the second is at most
\(m_{n+1}/100<\delta/100\). Hence
\begin{equation}
    |I\cap\Sigma_N|
    \geq\frac{9}{10}\delta-\frac1{100}\delta-\frac1{100}\delta
    >\frac45\delta.
\end{equation}
Letting $N\to\infty$ and applying Proposition
\ref{prop: prop 2.1 in Filman} to the constant sequence \(I\) and the
Hausdorff-convergent sequence \(\Sigma_N\), we obtain
\begin{equation}
    |I\cap\Sigma|\geq\frac45\delta.
\end{equation}
Since $I\subset B_\delta(E)$, this gives
\begin{equation}
    |\Sigma\cap B_\delta(E)|\geq\frac45\delta.
\end{equation}
Thus $\Sigma$ is homogeneous with, for instance, homogeneity constant
$\tau=4/5$.  The proof shows that one may take
$\eta_0(R)=c_1(2R)$, with $c_1$ from Lemma
\ref{lem: uniform periodic band velocity}.
\end{proof}

\endgroup

\begingroup
\section{Reflectionlessness and pure absolute continuity}
\label{sec:pure-ac-proof}

For a whole-line Jacobi matrix \(J\) and \(k\in\Z\), let
\[
    g_k(z)=\langle\delta_k,(J-z)^{-1}\delta_k\rangle .
\]

\begin{definition}\label{def:reflectionless-corrected}
Let \(B\subseteq\R\) be a bounded Borel set.  A whole-line Jacobi
matrix \(J\) is reflectionless on \(B\), written \(J\in\mathcal R(B)\),
if
\[
    \operatorname{Re}g_k(t+i0)=0
\]
for Lebesgue almost every \(t\in B\) and every \(k\in\Z\).
\end{definition}

We use the following closure property.  For \(R_0>0\), write
\[
    \mathcal J_{R_0}=\{J:\|J\|\leq R_0\},
\]
and equip this set with the product topology on the Jacobi
coefficients.  The radius $R_0$ here bounds the operator norm; it is
independent of the coefficient radius $R$ in $\mathcal J(R)$.
In particular, a coefficient bound $2R$ gives an operator-norm bound $6R$,
not $2R$.

\begin{proposition}[Reflectionless closure]\label{prop:reflectionless-closure}
Suppose \(J_n,J\in\mathcal J_{R_0}\), \(J_n\to J\) in the product
topology, and \(B_n,B\subseteq\R\) are bounded Borel sets.  If
\[
    J_n\in\mathcal R(B_n)
    \quad\text{and}\quad
    |B\setminus B_n|\longrightarrow0,
\]
then \(J\in\mathcal R(B)\).
\end{proposition}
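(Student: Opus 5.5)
The plan is to follow Remling's standard argument for closure of reflectionless classes, as in Remling's work on the absolutely continuous spectrum of Jacobi matrices. First I reduce the statement to a weak-integral characterization of reflectionlessness. Each $g_k$ is a Herglotz function. Its boundary values satisfy $\operatorname{Re} g_k(t+i0)=0$ a.e. on $B$ if and only if
\[
    \int_S \operatorname{Re} g_k(t+i0)\,dt=0
\]
for every Borel $S\subseteq B$. I will instead work with the equivalent condition on $h_k=\log g_k$, whose imaginary part takes values in $[0,\pi]$. Then $\operatorname{Re}g_k(t+i0)=0$ is equivalent to $\operatorname{Im}h_k(t+i0)=\pi/2$ a.e. on $B$. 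The advantage is that $\operatorname{Im}h_k$ is bounded, so $h_k$ has an exponential Herglotz representation with density $\xi_k=\pi^{-1}\operatorname{Im}h_k(\cdot+i0)\in[0,1]$ and no singular part.

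Next I use convergence. Since $J_n\to J$ entrywise and all norms are at most $R_0$, for each fixed $z\in\mathbb C_+$ we have $(J_n-z)^{-1}\delta_k\to(J-z)^{-1}\delta_k$ weakly. Indeed, the resolvents are uniformly bounded, and strong convergence holds on finitely supported vectors. Hence $g_k^{(n)}(z)\to g_k(z)$ locally uniformly on $\mathbb C_+$, and therefore $h_k^{(n)}\to h_k$ locally uniformly with a fixed branch. Convergence of the Herglotz functions $h_k^{(n)}$ gives weak-$*$ convergence of their densities, $\xi_k^{(n)}\to\xi_k$ in $\sigma(L^\infty,L^1)$ on bounded intervals. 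The spectral supports are uniformly bounded by $R_0$, so the constant terms and the tails at infinity are controlled uniformly.

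To conclude, fix a bounded Borel $S\subseteq B$ and set $f=\chi_S\in L^1$. The hypothesis gives $\xi_k^{(n)}=1/2$ a.e. on $B_n$, and $0\le\xi_k^{(n)}\le1$ everywhere. Therefore
\[
    \Bigl|\int_S\xi_k^{(n)}\,dt-\tfrac12|S|\Bigr|\le |S\setminus B_n|\le |B\setminus B_n|\to0.
\]
Passing to the limit gives $\int_S\xi_k=\tfrac12|S|$ for all such $S$. Applying this to $S=\{\xi_k>1/2\}\cap B$ and to $S=\{\xi_k<1/2\}\cap B$ forces $\xi_k=1/2$ a.e. on $B$. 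This gives $\operatorname{Re}g_k(t+i0)=0$ a.e. on $B$ for every $k$, that is, $J\in\mathcal R(B)$.

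The main obstacle is that boundary values of $g_k$ itself need not converge in any useful sense, because $\operatorname{Re}g_k$ is unbounded. This is exactly why I pass to $\log g_k$, where weak-$*$ compactness of the bounded densities $\xi_k$ does the work. The remaining care point is justifying that local uniform convergence of $h_k^{(n)}$ in $\mathbb C_+$ implies weak-$*$ convergence of the $\xi_k^{(n)}$. Here I plan to use that the $\xi_k^{(n)}$ are bounded by $1$ and equal $0$ or $1$ outside $[-R_0,R_0]$. Then any weak-$*$ limit point $\xi$ produces, through the exponential representation, the limit $h_k$. Uniqueness of that representation identifies $\xi=\xi_k$, so the whole sequence converges.
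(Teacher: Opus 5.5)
Your argument is correct. It is essentially the standard $\xi$-function proof behind \cite[Proposition 3.1(a)]{PolRemApprox}, which the paper simply cites: pass to $\log g_k$ and its exponential Herglotz representation, get weak-$*$ convergence of the bounded densities $\xi_k^{(n)}$ from pointwise convergence of the $g_k^{(n)}$, then test against $\chi_S$ for $S\subseteq B$. Two small points need tightening. First, drop your opening equivalence with $\int_S\operatorname{Re}g_k(t+i0)\,dt=0$: it fails in general because $\operatorname{Re}g_k(\cdot+i0)$ need not be locally integrable, and you never use it. Second, the step $\operatorname{Re}g_k(t+i0)=0\iff\xi_k(t)=1/2$ needs $g_k(t+i0)\neq0$ almost everywhere, which holds because $-1/g_k$ is also Herglotz and so has finite boundary values almost everywhere.
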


\begin{proof}
This is \cite[Proposition 3.1(a)]{PolRemApprox}.  Note that only the
one-sided measure condition displayed above is required.
\end{proof}

\begin{proposition}\label{prop:reflectionless-full-spectrum}
If \(\sj\in EC(\eta)\), then \(\sj\) is reflectionless on
\(\Sigma=\sigma(\sj)\).
\end{proposition}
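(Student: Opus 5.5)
We apply Proposition \ref{prop:reflectionless-closure} with the periodic approximants $J_n=\sj_n=\sj_{q_n}$ and sets $B_n=\Sigma_n=\sigma(\sj_n)$, $B=\Sigma$. The plan has three steps: show each periodic approximant is reflectionless on its own spectrum, check the hypotheses of the closure proposition, and verify $|\Sigma\setminus\Sigma_n|\to0$. The last step is the only nontrivial one.

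\textbf{Step 1: periodic matrices are reflectionless on their spectra.} This is standard. For a $q$-periodic $J$, the Weyl solutions $u_\pm$ at $z=t+i0$ with $t$ in a band interior are the Floquet solutions. Their multipliers are $e^{\pm 2\pi i\theta}$, and they are complex conjugates of each other. Hence
\[
    g_k(t+i0)=\frac{u_+(k)u_-(k)}{W(u_+,u_-)}=\frac{|u_+(k)|^2}{W},
\]
where the Wronskian $W$ is purely imaginary. So $\operatorname{Re}g_k(t+i0)=0$ for all $t$ in the band interiors, which means Lebesgue almost every $t\in\Sigma_n$. Thus $\sj_n\in\mathcal R(\Sigma_n)$. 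I would cite this (e.g.\ Remling's or Teschl's treatment) rather than reprove it.

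\textbf{Step 2: hypotheses of Proposition \ref{prop:reflectionless-closure}.} Operator-norm convergence $\sj_n\to\sj$ implies entrywise convergence of the coefficients, as noted in the proof of Theorem \ref{Th: Homogeneity}. Hence $\sj_n\to\sj$ in the product topology. Uniform norm boundedness is immediate: $\|\sj_n\|\leq\|\sj\|+1=:R_0$ on a tail.

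\textbf{Step 3: $|\Sigma\setminus\Sigma_n|\to0$.} Set $\varepsilon_n=\|\sj-\sj_n\|\to0$. By Proposition \ref{prop: prop 2.2 in Filman}, every point of $\Sigma\setminus\Sigma_n$ lies within distance $\varepsilon_n$ of $\Sigma_n$, and hence within $\varepsilon_n$ of one of the at most $2q_n$ band edges of $\Sigma_n$. This gives the crude bound
\[
    |\Sigma\setminus\Sigma_n|\leq 4q_n\varepsilon_n .
\]
The $EC(\eta)$ condition gives $\varepsilon_n=o(q_{n+1}^{-3}e^{-\eta q_n})$, so $q_n\varepsilon_n\to0$. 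The argument works for any $\eta>0$, which matches the statement requiring no largeness of $\eta$. The proposition then yields $\sj\in\mathcal R(\Sigma)$.

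The main obstacle is the measure condition in Step 3, which needs a count of band edges. Without the EC rate, a bare $d_H\to0$ would not control Lebesgue measure. Here the edge-counting bound suffices because $\varepsilon_n$ decays much faster than $1/q_n$. If needed, one can alternatively use Lemma \ref{lem: local spectral loss}-type estimates.
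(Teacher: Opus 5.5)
Your proposal is correct and is essentially the paper's proof: each periodic approximant is reflectionless on its spectrum, norm convergence gives product-topology convergence with a uniform norm bound, and the edge-counting estimate $|\Sigma\setminus\Sigma_n|\lesssim q_n\varepsilon_n\to0$ feeds into the closure result of Proposition~\ref{prop:reflectionless-closure}. The differences are only cosmetic: you take $R_0=\|\sj\|+1$ directly rather than passing through $\mathcal J(2R)$, and you count at most $2q_n$ edges to get $4q_n\varepsilon_n$, whereas the paper enlarges at most $q_n$ intervals to get $2q_n\varepsilon_n$.
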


\begin{proof}
Let
\[
    \sj_n=\sj_{q_n},\qquad
    \Sigma_n=\sigma(\sj_n),\qquad
    \varepsilon_n=\|\sj-\sj_n\|.
\]
Choose $R\geq1$ with $\sj\in\mathcal J(R)$.  As in the coefficient
estimate at the start of the homogeneity proof, after discarding finitely
many terms we have $\sj_n\in\mathcal J(2R)$.  Hence
\[
    \|\sj\|\leq3R,\qquad \|\sj_n\|\leq6R,
    \qquad \Sigma,\Sigma_n\subseteq[-6R,6R].
\]
Thus $R_0=6R$ is a common operator-norm bound, and all these spectra are
compact Borel sets.  This tail restriction preserves the exponential-class
condition and does not require $\eta$ to exceed the homogeneity threshold.

The spectral stability estimate gives
\begin{equation}\label{eq:Sigma-Sigma-n-Hausdorff}
    d_H(\Sigma,\Sigma_n)\leq\varepsilon_n.
\end{equation}
The spectrum \(\Sigma_n\) is a union of at most \(q_n\) compact
intervals.  Write
\[
    \mathscr N_\varepsilon(F)
    =\{x\in\R:\operatorname{dist}(x,F)\leq\varepsilon\}
\]
for the closed $\varepsilon$-neighborhood.  By
\eqref{eq:Sigma-Sigma-n-Hausdorff},
\[
    \Sigma\setminus\Sigma_n
    \subseteq
    \mathscr N_{\varepsilon_n}(\Sigma_n)\setminus\Sigma_n.
\]
The closed neighborhood is important here because the Hausdorff bound is
non-strict; it also covers $\varepsilon_n=0$.
Enlarging a union of at most \(q_n\) intervals by
\(\varepsilon_n\) adds total length at most
\(2q_n\varepsilon_n\).  Therefore
\begin{equation}\label{eq:one-sided-spectrum-measure}
    |\Sigma\setminus\Sigma_n|
    \leq2q_n\varepsilon_n = 2{q_n} \|\sj -\sj_n\| \les q_n^{-1}e^{-\eta q_n}\longrightarrow0.
\end{equation}
Every periodic Jacobi matrix is reflectionless on its spectrum \cite{PolRemApprox}, so
\[
    \sj_n\in\mathcal R(\Sigma_n).
\]
Moreover, the common bound $R_0=6R$ was established above, and norm
convergence implies convergence of all coefficients and hence convergence in
the product topology.  Proposition
\ref{prop:reflectionless-closure}, with \(B=\Sigma\) and
\(B_n=\Sigma_n\), now proves that
\(\sj\in\mathcal R(\Sigma)\).
\end{proof}

\begin{proposition}\label{prop:no-singular-corrected}
Suppose that a whole-line Jacobi matrix \(J\) is reflectionless on its
homogeneous spectrum \(\Sigma\).  Then \(J\) has no singular spectrum.
\end{proposition}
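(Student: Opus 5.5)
This is the theorem of Poltoratski and Remling: a Jacobi matrix that is reflectionless on a homogeneous set \(\Sigma\) has purely absolutely continuous spectrum on \(\Sigma\). The plan is to invoke it directly, after checking that its hypotheses match Definition \ref{def:reflectionless-corrected} and Definition \ref{def: homogeneity}. For orientation, and in case a self-contained argument is wanted, the route is the following.

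\textbf{Step 1 (exponential Herglotz representation).} Since \(\operatorname{Re}g_k(t+i0)=0\) for a.e.\ \(t\in\Sigma\), we have \(\arg g_k(t+i0)=\pi/2\) there. Outside \(\Sigma\), \(g_k\) is real on the real line, and in each gap it has at most one zero. Hence
\[
    g_k(z)=-\frac{1}{z-\mu}\exp\Bigl(\int\frac{\xi_k(t)}{t-z}\,dt\Bigr),
\]
with \(\xi_k=1/2\) a.e.\ on \(\Sigma\) and \(\xi_k\in\{0,1\}\) on the gaps, apart from at most one sign change per gap. This pins \(g_k\) down up to the choice of one point \(\mu_j\) per gap of \(\Sigma\).

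\textbf{Step 2 (homogeneity controls the exponential).} Homogeneity of \(\Sigma\) is exactly the condition under which such functions are boundedly controlled near \(\Sigma\). Concretely, \(|g_k(t+iy)|\) stays bounded in \(L^1\)-averaged form as \(y\downarrow0\) near \(\Sigma\). This follows from a Carleson-type estimate (Sodin--Yuditskii) showing that \(\sum_j|\text{gap}_j|\) near each point is controlled, so the boundary measure of \(\operatorname{Im}g_k\) has no singular part supported on \(\Sigma\).

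\textbf{Step 3 (no singular part of the spectral measure).} The spectral measure \(\mu_k\) has no singular part on \(\Sigma\) because of Step 2. Off \(\Sigma\) there is no spectrum at all, so there is no singular part there either. Since \(\delta_0,\delta_1\) generate \(\ell^2(\Z)\), applying this to \(k=0,1\) shows that \(J\) has no singular spectrum.

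The \textbf{main obstacle} is Step 2, ruling out singular parts. In particular, one must exclude point masses located at gap edges, and singular continuous mass living on the thin part of \(\Sigma\) where the gaps accumulate. This is precisely where homogeneity enters, through the Poltoratski--Remling argument via Herglotz functions (the alternative route being the Sodin--Yuditskii character-automorphic theory). Rather than reprove this step, I would cite their theorem and only verify that the notion of reflectionlessness used there agrees with ours.
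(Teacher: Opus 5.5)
Your proposal is correct and follows the paper's proof. Both reduce the statement to the Poltoratski--Remling theorem after noting that Definition \ref{def:reflectionless-corrected} is their notion of reflectionlessness; the paper makes the citation precise as \cite[Theorem 1.1(a)]{PolRem} (each $(\mu_k)_s$ is carried by the zero-density set $Z_\Sigma$, which homogeneity makes disjoint from $\Sigma$) and then concludes from the density of the span of the $\delta_k$, as your Step 3 does with the cyclic pair $\delta_0,\delta_1$. Your Steps 1--2 are only heuristic, and Step 2 is too vague to stand on its own, but since you rely on the citation rather than on them, nothing is missing.
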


\begin{proof}
Let \(\mu_k\) be the diagonal spectral measure of
\((J,\delta_k)\), so that
\[
    g_k(z)=\int_{\R}\frac{d\mu_k(t)}{t-z}.
\]
By the reflectionlessness of \(J\), \(\mu_k\) is a reflectionless measure
on \(\Sigma\) in the notation of \cite{PolRem}.
By \cite[Theorem 1.1(a)]{PolRem}, the singular part
\((\mu_k)_s\) is carried by the zero-density set of \(\Sigma\), meaning
that $ (\mu_k)_s(\R\setminus Z_\Sigma)=0$, where
\begin{equation}\label{eq:density-zero-set-corrected}
    Z_\Sigma=
    \left\{x\in\R:
    \lim_{h\downarrow0}
    \frac{|\Sigma\cap(x-h,x+h)|}{2h}=0
    \right\}.
\end{equation}
If \(x\in\Sigma\), homogeneity gives, for all sufficiently small
\(h>0\),
\[
    \frac{|\Sigma\cap(x-h,x+h)|}{2h}\geq\frac{\tau}{2}>0.
\]
Thus \(Z_\Sigma\cap\Sigma=\varnothing\).  Since every \(\mu_k\) is
supported on \(\sigma(J)=\Sigma\) and $(\mu_k)_s\leq\mu_k$, the singular
part is also carried by $\Sigma$.  We conclude that
\((\mu_k)_s=0\) for every \(k\in\Z\).

Let \(P_s\) be the singular spectral projection of \(J\). Then
\[
    \|P_s\delta_k\|^2=(\mu_k)_s(\R)=0
\]
for every \(k\).  The linear span of the standard basis is dense in
\(\ell^2(\Z)\), so \(P_s=0\).
\end{proof}

\begin{proof}[Proof of Theorem \ref{thm:main}]
Choose \(\eta_0(R)\) as in Theorem \ref{Th: Homogeneity}.  If
\(\eta>\eta_0(R)\), that theorem shows that
\(\Sigma=\sigma(\sj)\) is homogeneous, while Proposition
\ref{prop:reflectionless-full-spectrum} shows that \(\sj\) is
reflectionless on \(\Sigma\).  Proposition
\ref{prop:no-singular-corrected} therefore gives \(P_s=0\), so
\(\sj\) has purely absolutely continuous spectral type.
\end{proof}

\section{Acknowledgments}

I would like to thank Millivoje Lukic for his comments on our previous work, which led to the writing of this manuscript. I would also like to thank Rui Han for helpful discussions, comments, and recommendations during the preparation of this manuscript.

\section{AI disclosure}
AI was used to check the manuscript for grammatical errors and to help improve the clarity of the exposition.
\endgroup

\end{document}